\newcommand{\Oh}{\mathrm{O}}
\newcommand{\oh}{\mathrm{o}}
\def\JELname{\textbf{JEL Classification}\enspace}
\def\JEL#1{\par\addvspace\medskipamount{\rightskip=0pt plus1cm
\def\and{\ifhmode\unskip\nobreak\fi\ $\cdot$
}\noindent\JELname\ignorespaces#1\par}} 
\def\CATname{\textbf{Subject Category}\enspace}
\def\CAT#1{\par\addvspace\medskipamount{\rightskip=0pt plus1cm
\def\and{\ifhmode\unskip\nobreak\fi\ $\cdot$
}\noindent\CATname\ignorespaces#1\par}} 
\newcommand{\5}{\mathchoice{\mskip-1.5mu}{\mskip-1.5mu}{}{}}
\newcommand{\PP}{{\mathbb P}}    
\newcommand{\E}{{\mathbb E}}     
\newcommand{\prob}[1]{\operatorname{\PP}\5\5\left[ #1 \right]}
\newcommand{\poisson}[1]{\operatorname{Poisson}\5\5\left(#1\right)}
\newcommand{\NBin}[2]{\operatorname{NBin}\5\5\left(#1,#2\right)}
\newtheorem{theorem}{Theorem}
\newtheorem{prop}{Proposition}
\newtheorem{corollary}[theorem]{Corollary}
\theoremstyle{definition}
\newtheorem{definition}[theorem]{Definition}
\theoremstyle{remark}
\numberwithin{theorem}{section}
\numberwithin{equation}{section}
\begin{document}

\title[Efficient Recursions by Computer Algebra]{Finding Efficient Recursions for Risk Aggregation by Computer Algebra}

\author{Stefan Gerhold}
\thanks{This work has been done under the financial support by Christian Doppler Laboratory for Portfolio Risk Management (PRisMa Lab) \url{http://www.prismalab.at/}. S.~Gerhold gratefully acknowledges the fruitful collaboration and support by the Bank Austria Creditanstalt (BA-CA) and the Austrian Federal Financing Agency (ÖBFA) through CDG}
\address{Vienna University of Technology, Wiedner Hauptstra\ss{}e 8--10,
A-1040 Vienna, Austria}
\email{sgerhold@fam.tuwien.ac.at}	 

\author{Richard Warnung}
\thanks{R.~Warnung gratefully acknowledges financial support by WWTF MA13}
\address{Vienna University of Technology, Wiedner Hauptstra\ss{}e 8--10,
A-1040 Vienna, Austria}
\email{rwarnung@fam.tuwien.ac.at}	 

\begin{abstract}
  We derive recursions for the probability distribution of random sums by computer algebra.
  Unlike the well-known Panjer-type recursions, they are of finite order and thus allow
  for computation in linear time. This efficiency is bought by the assumption that the
  probability generating function of the claim size be algebraic. The probability generating
  function of the claim number is supposed to be from the rather general class
  of $D$-finite functions.
\end{abstract}


\keywords{Risk Aggregation, Collective Risk Model, Computer Algebra, D-finite Functions, Linear Recursions}

\maketitle
\JEL{C63} 
\CAT{IM22}

\section{Introduction}
Random sums
\[
  L = X_1 + \dots + X_N,
\]
play a prominent role in risk theory. 
We refer to the~$X_i$, which are independent copies of a discrete random variable $X$, as claims, and to~$N$,
which is independent of the $X_i$, as claim number.
A lot of research has been devoted to recursive calculation
of the distribution of~$L$. The classical Panjer recursion and its numerous extensions~\cite{Sundt:1992,Hesselager:1994,Wang/Sobrero} provide
infinite order linear recursions for this problem for various claim number distributions. Recently, Hipp~\cite{Hipp:Phasetype}
has found that finite order recursions can be obtained for phase-type claim size distributions,
with obvious advantages concerning computation time.
The generalized discrete phase-type distributions are the distributions of hitting times in a finite-state discrete-time Markov chain. They serve as a very flexible class of severity distributions with several useful properties~\cite{Neuts:1981,FittingPhasetype}.
Simple examples are the geometric and the negative binomial distribution (with integral parameter $\alpha$).
Another approach by DePril~\cite{DePril:1986} also leads to finite order recursions for some
claim size distributions, for example for piecewise constant or piecewise linear claim size distributions. But the recursion by DePril is not of finite order for the broad class of algebraic probability generating functions which we cover in this article.

We present an alternative method to obtain recursions of finite order satisfied by the distribution $\prob{L=n}$ of~$L$. Our assumption on
the claim number~$N$ is that its probability generating function (pgf) be a $D$-finite function. This broad class of functions is characterized
by linear differential equations with polynomial coefficients. The pgf of the claims is assumed to be algebraic.
Then the theory of $D$-finite functions ensures the existence of a finite order linear recursion
with polynomial coefficients for the distribution of~$L$. The recursion
can be determined by computer algebra, which avoids tedious hand calculations.

Section~\ref{se:d-finite} collects the parts of the theory of $D$-finite functions that we want to use,
together with their algorithmic realization in computer algebra systems.
The latter is applied in Section~\ref{se:examples} to some concrete examples.
Although we focus on computational efficiency, we do care about numerical stability of the proposed recursions.
Therefore, in Section~\ref{se:stability} we apply the stability theory of finite order recursions to our examples.

\section{$D$-finite functions}\label{se:d-finite}

We will assume throughout that the probability generating (pgf) function
\[
  \varphi_N(z) = \E[z^N]
\]
of the claim number $N$ is of the following kind.
\begin{definition}
  Let $f(z)$ be a function that is analytic at zero. Then
  $f(z)$ is called $D$-\emph{finite} if it satisfies a linear differential equation
  \begin{equation}\label{eq:ode}
    Q_0(z) f(z) + Q_1(z) f'(z) + \dots + Q_d(z) f^{(d)}(z) = 0
  \end{equation}
  with polynomial coefficients $Q_0(z),\dots,Q_d(z)$, not all identically zero.
\end{definition}
Most discrete distributions that are used in practice have $D$-finite pgfs. See, e.g., the comprehensive
list of hypergeometric distributions in Johnson, Kemp, Kotz~\cite{JoKeKo05}.
Many properties of $D$-finite functions follow from the classical theory
of ordinary differential equations~\cite{Ince:ODE}; see Stanley~\cite{St80,St99} for an introduction from
a combinatorial viewpoint including a proof of the following result.
\begin{theorem}\label{thm:d-finite}
\quad
\begin{itemize}
  \item[(i)] An analytic function $f(z)=\sum_{n\geq0}a_n z^n$ is $D$-finite if and only if its coefficient sequence $(a_n)_{n\geq0}$
  satisfies a finite order linear recursion with polynomial coefficients.
  \item[(ii)] The sum and the product of two $D$-finite functions are $D$-finite. The composition $f(g(z))$ of a $D$-finite function $f(z)$
  and an algebraic function $g(z)$ is $D$-finite.
\end{itemize}
\end{theorem}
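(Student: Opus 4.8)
The plan is to prove Theorem~\ref{thm:d-finite} in the two parts as stated, relying on the correspondence between differential equations for a power series and recursions for its coefficients, and on closure properties that ultimately reduce to linear algebra over the field of rational functions $\re(z)$ (or $\cn(z)$).

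For part (i), the key observation is that the two operations ``multiply by $z$'' and ``differentiate'' act on $f(z)=\sum_{n\ge0}a_n z^n$ in a way that is mirrored on the coefficient side by ``shift the index'' and ``multiply by $n$''. Concretely, I would first show the forward direction: given the ODE~\eqref{eq:ode}, expand each $Q_j(z)f^{(j)}(z)$ as a power series, collect the coefficient of $z^n$, and observe that it is a fixed finite $\re$-linear combination of terms of the form $p(n)\,a_{n+i}$ with $p$ polynomial and $i$ ranging over a bounded set of integers; setting this combination to zero for all $n$ gives the desired recursion. For the converse, I would run the same bookkeeping in reverse: a finite order linear recursion with polynomial coefficients for $(a_n)$ translates, term by term, into a relation among $f$ and finitely many of its derivatives with polynomial (in fact, built from $z$ and the shift) coefficients — one has to be slightly careful that the recursion is ``for all $n\ge$ some $n_0$'' and absorb the finitely many exceptional initial terms, which only changes the ODE by adding a polynomial forcing term that can itself be cleared by differentiating enough times. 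I expect the main bookkeeping obstacle here to be handling the index shifts cleanly (negative shifts near $n=0$, and normalising the leading polynomial coefficient), but this is routine once the dictionary between operators is set up.

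For part (ii), closure under sum and product I would prove by the standard ``finite-dimensionality'' argument: $f$ is $D$-finite exactly when the $\re(z)$-vector space spanned by $f,f',f'',\dots$ is finite dimensional. If $V_f$ and $V_g$ are these spaces, then all derivatives of $f+g$ lie in $V_f+V_g$ and, by the Leibniz rule, all derivatives of $fg$ lie in the tensor-type span $V_f\cdot V_g$ (the $\re(z)$-span of products $u v$ with $u\in V_f$, $v\in V_g$); both of these are finite dimensional, so $f+g$ and $fg$ satisfy linear differential equations over $\re(z)$, which one clears of denominators to get polynomial coefficients. For the composition $f(g(z))$ with $g$ algebraic, the point is that $g$ satisfies a polynomial equation $P(z,g(z))=0$, hence $g' = -P_z(z,g)/P_w(z,g)$ is a rational function of $z$ and $g$, so by induction every derivative of $g$ lies in the finite-dimensional $\re(z)$-space spanned by $1,g,g^2,\dots,g^{\deg_w P-1}$; combining this with the chain rule and the fact that the derivatives of $f$ (evaluated at $g$) span a finite-dimensional space, one checks that the derivatives of $f\circ g$ all lie in a fixed finite-dimensional $\re(z)$-space, giving $D$-finiteness.

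The hard part, conceptually, is the composition statement: one must simultaneously control the derivatives of the outer function (which live in $V_f$ but are now functions of $g(z)$, not of $z$) and the algebraic dependence of $g$'s derivatives, and then verify that the combined span is finite dimensional over $\re(z)$ — this requires knowing that $\re(z)(g)$ (or $\re(z)[g]$ modulo $P$) is a finite extension and that it is closed under the derivation extended from $\re(z)$. Since the excerpt explicitly cites Stanley~\cite{St80,St99} for a proof, I would at this point simply reference that source for the detailed verification rather than reproduce the full argument, noting that all three closure claims are instances of the ``closure under operations that preserve finite dimensionality of the span of derivatives over $\re(z)$'' principle.
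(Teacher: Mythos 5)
The paper does not prove Theorem~\ref{thm:d-finite} at all; it simply cites Stanley~\cite{St80,St99} and the classical ODE literature~\cite{Ince:ODE}. Your sketch correctly reconstructs the standard argument found there: the operator dictionary ($z\cdot$ $\leftrightarrow$ shift, $d/dz$ $\leftrightarrow$ multiply by the index) for part (i), and the ``$f$ is $D$-finite iff $\operatorname{span}_{\cn(z)}\{f,f',f'',\dots\}$ is finite-dimensional'' criterion for the closure properties in part (ii), with the algebraicity of $g$ entering via the fact that $\cn(z)[g]$ is a finite extension closed under differentiation so that every derivative of $f\circ g$ lies in the finite-dimensional $\cn(z)$-span of the products $g^i\,f^{(j)}(g)$. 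So your approach is essentially the same as what the paper delegates to its reference, and I see no gap beyond the minor bookkeeping you already flag (index ranges in (i), and spelling out that a relation $\sum Q_j(w)f^{(j)}(w)=0$ specialises at $w=g(z)$ to a relation with coefficients in $\cn(z)[g]$, which then descends to $\cn(z)$ by finiteness of the extension).
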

Part~(i) says that a differential equation of the form~\eqref{eq:ode} always translates into a recursion
\[
  R_0(n) a_n + R_1(n) a_{n+1} + \dots + R_e(n) a_{n+e} = 0, \qquad n\geq0,
\]
with polynomial coefficients $R_k(n)$ for the power series coefficients $(a_n)_{n\geq0}$ of $f(z)$, and vice versa.
Therefore, the distributions from the Panjer class are simple examples of distributions with
$D$-finite pgf. More examples of such distributions can be found, e.g., in Panjer and Willmot~\cite{Panjer/Willmot:Difference}.
Hesselager~\cite{Hesselager:1994} has found a recursion for the distribution of $L$ for claim number distributions with arbitrary $D$-finite pgf.
It is valid for any claim size distribution, but is of infinite order in general.
We, on the other hand, aim at finite order recursions for the distribution of $L$.
Part~(ii) of Theorem~\ref{thm:d-finite} is central for our approach.
Recall that an algebraic function $g(z)$ satisfies $P(z,g(z))\equiv0$ for some non-trivial bivariate polynomial $P$.
We explicitly note the result that part~(ii) of Theorem~\ref{thm:d-finite} implies in our situation.

\begin{corollary}\label{cor:composition}
  If the pgf $\varphi_N(z)$ of the claim numbers is $D$-finite and the pgf $\varphi_{X}(z)$
  of the severity distribution is algebraic, then
  \[
    \varphi_L(z) = \varphi_N(\varphi_{X}(z)) = \sum_{n\geq0}a_n z^n
  \]
  is $D$-finite, and its coefficients $a_n$ satisfy a linear
  recurrence of finite order with polynomial coefficients.
\end{corollary}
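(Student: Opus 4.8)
The plan is to recognize this as an essentially immediate consequence of Theorem~\ref{thm:d-finite}, once the pgf of the random sum is written in the right form. First I would recall the classical compounding identity: conditioning on $N$ and using the independence of $N$ and the $X_i$,
\[
  \varphi_L(z) = \E[z^{X_1+\dots+X_N}] = \E\bigl[\E[z^{X_1+\dots+X_N}\mid N]\bigr] = \E[\varphi_X(z)^N] = \varphi_N(\varphi_X(z)),
\]
so that $\varphi_L = \varphi_N \circ \varphi_X$ is literally the composition of the claim-number pgf with the severity pgf.

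Next I would check that $\varphi_L$ is analytic at the origin, so that the notion of $D$-finiteness applies to it. Since $\varphi_X$ is a pgf, it is analytic at $0$ with $\varphi_X(0) = \PP[X=0] \in [0,1)$ (the degenerate case $X\equiv 0$ being trivial), and since $\varphi_N$ is a pgf its radius of convergence is at least $1$, so $\varphi_N$ is analytic at $\varphi_X(0)$; therefore $\varphi_N\circ\varphi_X$ is analytic at $0$. With this in hand, part~(ii) of Theorem~\ref{thm:d-finite} applies directly: $\varphi_N$ is $D$-finite by assumption, $\varphi_X$ is algebraic by assumption, and the composition of a $D$-finite function with an algebraic function is $D$-finite. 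Hence $\varphi_L$ is $D$-finite.

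Finally, invoking part~(i) of Theorem~\ref{thm:d-finite}, the $D$-finiteness of $\varphi_L(z) = \sum_{n\geq0} a_n z^n$ is equivalent to the coefficient sequence $(a_n)_{n\geq0}$ satisfying a finite order linear recursion with polynomial coefficients, which is exactly the assertion. I do not expect a genuine obstacle here: the statement is a corollary, and the only points requiring care are pinning down the analyticity at zero and quoting the two closure properties. The real content --- namely, how to \emph{produce} a differential equation for $\varphi_L$ from one for $\varphi_N$ together with the polynomial equation for $\varphi_X$, and how to convert it into the recursion --- is the algorithmic task carried out in the later sections, not part of this existence statement.
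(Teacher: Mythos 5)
Your proof is correct and matches the paper's (implicit) argument: the paper presents this corollary as an immediate consequence of Theorem~\ref{thm:d-finite}, and you invoke parts~(ii) and~(i) of that theorem in exactly the same way, with the compounding identity $\varphi_L = \varphi_N\circ\varphi_X$ and the analyticity check spelled out for completeness.
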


Examples of admissible severity distributions include those whose pgfs are polynomials or rational functions, in particular,
the phase-type distributions mentioned in the introduction.
Furthermore, consider the negative binomial distribution $\NBin{\alpha}{p}$ with $\alpha>0$ and $p \in (0,1)$, i.e.,
\[
  \PP[X=n] = {\alpha+n-1 \choose n} p^{\alpha}(1-p)^n, \qquad n\geq0.
\]
Its pgf
\[
  \varphi_{X}(z) = \left(\frac{p}{1-(1-p)z}\right)^{\alpha}
\]
is algebraic if $\alpha$ is a rational number.
Other examples of distributions with algebraic pgf are the binomial distribution, the discrete Mittag-Leffler distribution,
and its generalization, the discrete Linnik distribution~\cite{JoKeKo05}. Again,
parameters appearing in the exponent have to be constrained to the rational numbers.
The distribution of the number of games lost by the ruined gambler in the classical gambler's ruin problem~\cite{JoKeKo05}
has an algebraic pgf, too.

All operations described in Theorem~\ref{thm:d-finite} are constructive
and have been implemented in computer algebra packages. We have done the computations below with Mallinger's {\sf Mathematica}
package {\sf GeneratingFunctions}~\cite{Ma96}. Another possible choice would have been Salvy and Zimmermann's {\sf Maple}
package {\sf gfun}~\cite{SaZi94}.
The software requires an algebraic equation for the claim size pgf and a differential equation
for the claim number pgf. We assume that our claim size pgf is an explicit algebraic (or even rational) function,
so the first of these two equations is obvious in our examples. As for the second one, suppose we have a closed form expression for
the $D$-finite claim number pgf.
A differential equation for it can be built up step by step, by starting from obvious differential equations (for
the exponential function, say) and using commands that realize the closure properties
{} from Theorem~\ref{thm:d-finite}.
Finally, the package allows to convert the differential equation that we have thus found for
$\varphi_L(z)$ into the desired finite order recursion for its coefficients.

The package {\sf GeneratingFunctions} can deal with undetermined parameters.
For instance, the differential equation $(a z+b)f'(z) - cz^2 f(z)=0$ with parameters $a,b,c$ would be a valid input,
and we could, e.g., compute a recurrence relation for the power series coefficients of $f(z)$.
When inputting algebraic relations, however, exponents must be fixed: We can
specify $f(z)^{2}=(a z + b)^3$, e.g., but not $f(z)^\alpha=(a z + b)^3$ with undetermined~$\alpha$.

\section{Examples}\label{se:examples}

We illustrate our approach by three examples. The intermediate differential equations obtained during the process are not displayed.
First we consider $N \sim \NBin{\alpha}{p}$ and $X \sim \NBin{\beta}{q}$ with $p,q \in (0,1)$. Then the pgf of the aggregate loss is given by
\begin{equation}\label{eq:pgf negbin negbin}
		\varphi_L(z) = \left(\frac{p}{1-(1-p)(\frac{q}{1-(1-q)z})^\beta}\right)^\alpha.
\end{equation}
Our goal is to find a differential equation for $\varphi_L(z)$ and thence a recurrence relation for $a_n=\prob{L=n}$.
As a byproduct of the stability analysis in Section~\ref{se:stability}, we will obtain the asymptotics of $a_n$
for general $\alpha$ and $\beta$. To compute a recurrence for $a_n$ by computer algebra, however,
these parameters have to be concrete rational numbers (see the last paragraph of Section~\ref{se:d-finite}). We choose $\alpha=\tfrac12$ and $\beta=\tfrac13$.
Then the function $\varphi_N(z)$ satisfies the algebraic equation
\[
  (1-(1-p)z)\varphi_N(z)^2 = p,
\]
{}from which the command {\sf AlgebraicEquationToDifferentialEquation} computes a differential equation for $\varphi_N(z)$.
{}From the latter and the algebraic equation
\[
  (1-(1-q)z)\varphi_X(z)^3 = q
\]
of $\varphi_X(z)$, the command {\sf AlgebraicCompose} (cf.\ Corollary~\ref{cor:composition}) derives the differential equation
\begin{align}\label{eq: ODE}
	\begin{split}
	&-5(p-1)^3(q-1)^3 q f(z) +  \\
	&10(q-1)^2(1-z-qz)(16-q+3pq-3p^2q+p^3q-16z+16qz) f'(z) +  \\
	&36(q-1)(1-z+qz)^2(8-3q+9pq-9p^2q+3p^3q-8z+8qz) f''(z) +  \\
	&72(1-z+qz)^3(1-q3pq-3p^2q+p^3q-z+qz) f'''(z) = 0
	\end{split}
\end{align}
for $\varphi_L(z)= \varphi_N(\varphi_X(z))$. Finally, this equation is transformed into the recurrence
\begin{align}\label{eq:rec negbin negbin}
	\begin{split}
	&8n(1+3n)(2+3n)(q-1)^4 a_n	 +  \\
	&(q-1)^3(320 + 896n+864n^2 + 288n^3 - 5q - 46nq - 108n^2q -72 n^3q + 15 pq + 138 n p q + 324 n^2 pq +\\
	&216 n^3 pq -15 p^2 q -138 np^2q -324 n^2p^2q-216 n^3 p^2q+5p^3q+46np^3q + 108 n^2p^3q +72n^3p^3q)	a_{n+1} +  \\
	&(4+2n)(q-1)^2(512 + 648 n + 216 n^2 -113 q - 216 n^2 -113 q - 216 nq -108n^2q + \\
	& 339 pq+648 npq + 324 n^2 pq -339 p^2q - 648 np^2q - 324 n^2p^2q + 113p^3q + 216 n p^3q + 108 n^2 p^3 q) a_{n+2} +  \\
	&36(2 + n)(3 + n)(q-1)(16 + 8 n - 9 q - 6 n q + 27 p q + 18 n p q - 27 p^2 q - 18 n p^2 q + 9 p^3 q + 6 n p^3 q) a_{n+3}\\
   &= 72(2 + n)(3 + n)(4 + n)(-1 + q - 3 p q + 3 p^2 q - p^3 q) a_{n+4}
	\end{split}
\end{align}
for the $a_n$ by {\sf DifferentialEquationToRecurrenceEquation}. Using this recurrence, the probability $a_n$ can be computed with $\Oh(n)$ operations.
We will show in Section~\ref{se:stability} that the computation is numerically stable.

In our second example we suppose that $N \sim \poisson{\Lambda}$ and $\Lambda\sim\mathrm{GIG}(\psi,\chi,\theta)$, the generalized inverse Gaussian distribution with parameters $\theta$ and $\psi,\chi>0$.
In this case we have
\begin{equation}\label{eq:phi_N}
  \varphi_N(z) = \frac{\psi^{\theta/2}}{K_\theta(\sqrt{\psi\chi})} \cdot (\psi+2-2z)^{-\theta/2} \cdot K_\theta(\sqrt{\chi(\psi+2-2z)}),
\end{equation}
where $K_\theta(z)$ is a modified Bessel function of the second kind~\cite[p.~374]{Abramowitz/Stegun}.
Provided that $\theta$ is a rational number, the second factor is an algebraic function, hence $D$-finite.
(Below we will fix $\theta=\tfrac23$.)
The Bessel function $K_\theta(z)$ is $D$-finite for any $\theta$, by virtue of its classical second order differential equation.
Therefore, by Theorem~\ref{thm:d-finite}, our $\varphi_N(z)$ is indeed a $D$-finite function.

As for the severities, we take them to be shifted geometrically distributed: $X\sim\mathrm{Geo}(1,q)$ with $q \in (0,1)$, so that
\[
   \varphi_X(z) = \frac{qz}{1-(1-q)z}.
\]
Once again we want to find a differential equation, and thence a recurrence for the power series coefficients, for the function
\begin{equation}\label{eq:phi_N 2}
  \varphi_L(z) = \varphi_N(\varphi_{X}(z)) =: \mathrm{const} \cdot f(\varphi_{X}(z)) \cdot K_\theta(g(\varphi_{X}(z))),
\end{equation}
where $f(z)$ and $g(z)$ are algebraic functions defined according to~\eqref{eq:phi_N}.
The command {\sf AlgebraicEquationToDifferentialEquation} computes a differential equation
for $f(\varphi_{X}(z))$ from the algebraic equation
\[
  f(\varphi_{X}(z))^{-2/\theta} = \psi + 2 -  \frac{2qz}{1-(1-q)z}.
\]
As mentioned above, this works for any rational $\theta$; to perform the calculation step, we have to fix its value
though, say $\theta=\tfrac23$.
A differential equation for $K_{2/3}(g(\varphi_{X}(z)))$ can be found with {\sf AlgebraicCompose}. It takes as input
the differential equation of $K_{2/3}(z)$ and the algebraic equation
\[
  g(\varphi_{X}(z))^2 = \chi(\psi + 2 - \frac{2qz}{1-(1-q)z}).
\]
Now that we have a differential equation for each of the two (non-constant) factors in~\eqref{eq:phi_N 2}, the command {\sf DECauchy}
computes a differential equation for $\varphi_N(\varphi_{X}(z))$, which is transformed into a recursion
for its power series coefficients~$a_n$ by {\sf DifferentialEquationToRecurrenceEquation}. The recurrence we find is
\begin{multline}\label{P-GIG rec}
  3n(1 + n)(-1 + q)^3(-2 - \psi + \psi q)\cdot a_n \\
  + 2(1 + n)(-1 + q)^2(-18 - 12 n - 
  9 \psi - 6 n\psi + q + 3 n q + 9 \psi q + 6 n \psi q)\cdot a_{n+1} \\
  + (144 + 144 n + 36 n^2 + 72 \psi + 72n \psi + 
  18 n^2 \psi - 188 q - 202 n q - 54 n^2 q - 144 \psi q - 144 n \psi q \\
  - 36 n^2 \psi q + 44 q^2 - 3 \chi q^2 +   58 n q^2 + 18 n^2 q^2 + 72 \psi q^2 + 72 n \psi q^2 + 
  18 n^2 \psi q^2)\cdot a_{n+2} \\
  +  2 (3 + n) (-30 - 12 n - 15 \psi - 6 n \psi + 19 q + 
  9 n q + 15 \psi q + 6 n \psi q)\cdot a_{n+3} \\
  +  3 (3 + n) (4 + n) (2 + \psi)\cdot a_{n+4}= 0.
%
\end{multline}
Summing up, if $N$ has a mixed Poisson distribution $\poisson{\Lambda}$ with $\Lambda\sim \mathrm{GIG}(\psi,\chi,\tfrac23)$,
and $X\sim\mathrm{Geo}(1,q)$, then we can compute the total loss probabilities with $\Oh(n)$ operations
by the recurrence~\eqref{P-GIG rec}.

The third example we consider is
$N \sim \poisson{\lambda}$ and $X \sim \NBin{\frac12}{p}$ with $p \in (0,1)$. To determine a differential equation
for the pgf
\[
	\varphi_L(z) = \exp\left(\lambda\left(\Bigl(\frac{p}{1-(1-p)z}\Bigr)^{1/2} -1\right)\right),
\]
we use again the command {\sf AlgebraicCompose}. Its input are the algebraic equation
\[
  (1-(1-p)z)\varphi_X(z)^2 = p
\]
and the differential equation
\[
  \varphi_N'(z) = \lambda \varphi_N(z).
\]
{\sf AlgebraicCompose} then finds the differential equation
\begin{equation}\label{eq:ode poisson negbin}
  \lambda^2p(1-p)^2 \varphi_L(z) + 6(1-p)(1-(1-p)z)^2 \varphi_L'(z) - 4(1-(1-p)z)^3 \varphi_L''(z) = 0.
\end{equation}
Using {\sf DifferentialEquationToRecurrenceEquation}, we obtain the recurrence
\begin{multline}\label{eq:rec poisson negbin}
 2n(2n+1)(1-p)^3 a_n - (1-p)^2(-\lambda^2p+12n^2+24n+12) a_{n+1} \\
 + 6(n+2)(2n+3)(1-p) a_{n+2} = 4(n+2)(n+3)a_{n+3}
\end{multline}
for the probabilities $a_n$.

\section{Numerical Stability and Asymptotics}\label{se:stability}

The computation of a sequence by a linear recurrence relation is numerically stable if the sequence grows
at least as fast as any other solution of the recurrence~\cite{Wimp:1984,Panjer/Wang}.
This is intuitively clear, since rounding errors will always add a portion of each member
of a fundamental system of the recurrence to the solution we are computing. Asymptotically dominant solutions will therefore wipe out
subordinate solutions in the long run.
In this section we show how to apply methods from asymptotic analysis to the examples from Section~\ref{se:examples}.
The growth of the coefficients $a_n$ depends on the location and nature of the singularity of the generating function
$\varphi_L(z)$ that is closest to the origin~\cite{FlSe07}.
To assess the growth of the other solutions of our recurrences, we have to analyze
the dominating singularity of the differential equation for $\varphi_L(z)$.
If it is regular, then a fundamental system can in principle be determined by Frobenius' method. Flajolet and Odlyzko's
singularity analysis~\cite{FlOd90,FlSe07} allows to obtain the growth rate of the power series coefficients of these solutions.
Then, hopefully, we can read off that the solution we are interested in dominates the other ones.
This fairly general method works in the first two examples from Section~\ref{se:examples}.

\begin{prop}\label{prop: NegBinNegBin1}
  Let $N \sim \NBin{\alpha}{p}$ and $X \sim \NBin{\beta}{q}$ with $p,q \in (0,1)$.
  Then the probabilities $a_n=\prob{L=n}$ satisfy
\begin{align}\label{negbin negbin asmpt}
	a_n \sim C z_1^{-n} n^{\alpha-1}
\end{align}
  as $n\to\infty$, where
  \[
    z_1 = \frac{1-q(1-p)^{1/\beta}}{1-q} \quad \text{and} \quad C=\frac{(pq)^{\alpha}(1-p)^{\alpha/\beta}}{\Gamma(\alpha) \beta^\alpha(1-q(1-p)^{1/\beta})^\alpha}.
  \]
\end{prop}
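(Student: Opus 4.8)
The plan is to apply the singularity-analysis machinery sketched at the top of this section to the explicit pgf $\varphi_L(z)$ in~\eqref{eq:pgf negbin negbin}. First I would locate the dominant singularity of
\[
  \varphi_L(z) = \left(\frac{p}{1-(1-p)\varphi_X(z)^{\phantom{\beta}}}\right)^{\alpha}, \qquad \varphi_X(z) = \left(\frac{q}{1-(1-q)z}\right)^{\beta}.
\]
The inner function $\varphi_X(z)$ has its own branch point at $z = 1/(1-q) > 1$, but the composition becomes singular earlier, at the point where the base of the outer power vanishes, i.e.\ where $1-(1-p)\varphi_X(z) = 0$. Solving $\varphi_X(z_1) = 1/(1-p)$ gives $(1-(1-q)z_1)^{\beta} = q^{\beta}(1-p)$, hence $1-(1-q)z_1 = q(1-p)^{1/\beta}$ and
\[
  z_1 = \frac{1 - q(1-p)^{1/\beta}}{1-q},
\]
which is the claimed value; one checks $0 < z_1 < 1/(1-q)$, so $z_1$ is indeed reached before the inner branch point and is the unique dominant singularity on the circle of convergence (the coefficients of $\varphi_L$ being nonnegative, Pringsheim applies). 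Near $z_1$ the function $1-(1-p)\varphi_X(z)$ has a simple zero (its derivative there is nonzero since $\varphi_X'(z_1)\neq 0$), so
\[
  1-(1-p)\varphi_X(z) = c\,(z_1 - z)\bigl(1 + \Oh(z_1-z)\bigr)
\]
for an explicit constant $c = (1-p)\varphi_X'(z_1)$, and therefore
\[
  \varphi_L(z) \sim \left(\frac{p\,}{c\,(z_1-z)}\right)^{\alpha} = \frac{p^{\alpha}}{c^{\alpha}}\,(z_1-z)^{-\alpha}
\]
as $z \to z_1$. Transfer theorems for singularity analysis (Flajolet--Odlyzko,~\cite{FlOd90,FlSe07}) then yield
\[
  a_n \sim \frac{p^{\alpha}}{c^{\alpha}\,z_1^{\alpha}\,\Gamma(\alpha)}\, z_1^{-n}\, n^{\alpha-1},
\]
which is~\eqref{negbin negbin asmpt}; substituting $\varphi_X'(z_1) = \beta q^{\beta}(1-q)(1-(1-q)z_1)^{-\beta-1} = \beta(1-q)(1-p)^{-1}(1-p)^{1/\beta}\big/\bigl(q(1-p)^{1/\beta}\bigr)$ and simplifying gives the stated $C$.

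The routine but slightly delicate points are: (a) verifying that $\varphi_L$ admits an analytic continuation to a $\Delta$-domain (``Camembert'') at $z_1$ apart from that one singularity — this follows because $\varphi_X$ is analytic and injective near the real segment $[0,z_1]$ and $1-(1-p)\varphi_X$ has no other zero on $|z|\le z_1$, the latter by the strict monotonicity of $|\varphi_X|$ along the relevant region together with nonnegativity of coefficients; (b) pinning down the constant $c$ and then grinding the algebra to match $C$ exactly. The main obstacle, and the reason this is stated as a proposition rather than left to the reader, is point~(a) combined with the fact that $\alpha$ and $\beta$ are arbitrary positive reals: the exponents $\alpha$, $\beta$, $1/\beta$ are genuine (possibly irrational) powers, so one must be careful that the branches of $\varphi_X(z)^{\beta}$-type expressions and of the outer $(\,\cdot\,)^{\alpha}$ are the principal ones on the slit domain, and that no spurious singularity is introduced by the fractional powers on the boundary of the $\Delta$-domain. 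Once the geometry of the branch points is sorted out, the rest is a direct application of the singularity-analysis transfer theorem followed by bookkeeping.

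(For later use in the stability discussion, the same local analysis of the third-order differential equation~\eqref{eq: ODE} — whose dominant singularity is again at $z_1$ — shows via Frobenius' method that the three fundamental solutions have local exponents whose real parts are dominated by that of $\varphi_L$ when $\alpha$ is in the relevant range, so that $a_n$ indeed grows at least as fast as every other solution of~\eqref{eq:rec negbin negbin}; but that is the content of the stability remarks, not of the asymptotic statement proved here.)
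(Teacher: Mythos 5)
Your proposal follows essentially the same route as the paper: identify the dominant singularity $z_1$ where the base of the outer power vanishes, expand $\varphi_L$ locally to get a $(z_1-z)^{-\alpha}$ leading term, and transfer via Flajolet--Odlyzko. The paper's only cosmetic difference is that it first rescales $z\mapsto z z_1$ to put the singularity at $1$ and then expands in powers of $(1-z)$, while you expand directly at $z_1$; the paper also simply asserts that the singularity is at $z_1$ and applies transfer without spelling out the $\Delta$-domain verification that you rightly flag as the delicate point. One small slip: your displayed formula for $\varphi_X'(z_1)$ simplifies to $\beta(1-q)/(q(1-p))$, but the correct value is
\[
  \varphi_X'(z_1) = \beta q^{\beta}(1-q)\bigl(q(1-p)^{1/\beta}\bigr)^{-\beta-1} = \frac{\beta(1-q)}{q\,(1-p)^{1+1/\beta}},
\]
i.e.\ you have dropped a factor $(1-p)^{-1/\beta}$. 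Without it, $c=(1-p)\varphi_X'(z_1)$ loses a factor $(1-p)^{-1/\beta}$ and the resulting $C$ would be missing its $(1-p)^{\alpha/\beta}$; with the corrected value everything matches the stated constant, so this is a transcription error rather than a gap in the argument.
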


\begin{proof}
The dominating singularity of $\varphi_L(z)$ is located at $z=z_1$.
Moving the singularity to $z=1$ and putting $c:=1-q(1-p)^{1/\beta}$, we find
\[
	\varphi_L(zz_1) = \left(\frac{p}{1-(1-p)\left(\frac{q}{1-c z}\right)^\beta}\right)^\alpha.
\]
{}From the expansion
\begin{align*}
  \left(\frac{q}{1-c z}\right)^\beta &= \left(\frac{q}{1-c}\right)^\beta \left(1 + \frac{c}{1-c}(1-z) \right)^{-\beta} \\
  &= \frac{1}{1-p}\left(1 - \frac{\beta c}{1-c}(1-z) + \Oh((1-z)^2) \right)
\end{align*}
we thus obtain
\begin{align}
  \varphi_L(zz_1) &= p^\alpha \left( \frac{\beta c}{1-c}(1-z) + \Oh((1-z)^2) \right)^{-\alpha} \notag \\
  &= \frac{(pq)^{\alpha}(1-p)^{\alpha/\beta}}{\beta^\alpha(1-q(1-p)^{1/\beta})^\alpha} (1-z)^{-\alpha}  + \Oh((1-z)^{-\alpha+1}) \label{eq:phi asympt}
\end{align}
as $z$ tends to $1$. The $n$-th power series coefficient of $(1-z)^{-\alpha}$
asymptotically equals $n^{\alpha-1}/\Gamma(\alpha)$~\cite[Chapter VI]{FlSe07}. The coefficients of the error term in~\eqref{eq:phi asympt}
are of smaller order~\cite[Chapter VI]{FlSe07}, whence the desired result.
We note that an asymptotic expansion to arbitrary order can be obtained in the same way.
\end{proof}

To be able to derive a concrete recursion we had to assign values to the parameters $\alpha$
and $\beta$ in Section~\ref{se:examples}.

\begin{prop}\label{prop: NegBinNegBinStab}
  Let $N \sim \NBin{\tfrac12}{p}$ and $X \sim \NBin{\tfrac13}{q}$ with $p,q \in (0,1)$.
  Then the computation of the $a_n$ by the recursion~\eqref{eq:rec negbin negbin} is numerically stable.
\end{prop}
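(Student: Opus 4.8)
The plan is to apply the general stability criterion recalled at the start of Section~\ref{se:stability}: we must show that the particular solution $a_n = \prob{L=n}$ of the order-four recursion~\eqref{eq:rec negbin negbin} grows at least as fast as every other solution of that recursion. First I would note that, by Proposition~\ref{prop: NegBinNegBin1} specialised to $\alpha=\tfrac12$, $\beta=\tfrac13$, we already have the exact growth rate of the solution of interest, namely $a_n \sim C z_1^{-n} n^{-1/2}$ with $z_1=(1-q(1-p)^3)/(1-q)$. So the task reduces to bounding the growth of the remaining fundamental solutions of~\eqref{eq:rec negbin negbin}, equivalently the remaining solutions of the differential equation~\eqref{eq: ODE} for $\varphi_L(z)$ beyond $\varphi_L$ itself.

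Next I would locate and classify the dominant singularity of the ODE~\eqref{eq: ODE}. Reading off the leading coefficient $72(1-z+qz)^3(1-q-3pq-3p^2q+p^3q-z+qz)$, the singular points are $z=1/(1-q)$ (with multiplicity three) and the root of the linear factor, which one checks is exactly $z=z_1=(1-q(1-p)^3)/(1-q)$; since $p,q\in(0,1)$ one verifies $z_1 < 1/(1-q)$, so the singularity nearest the origin is the simple zero at $z_1$, and it is a regular singular point of the ODE. Then I would run Frobenius' method at $z=z_1$: compute the indicial equation, obtain its four roots (one of them should be $-\alpha=-\tfrac12$, matching the $(1-z/z_1)^{-1/2}$ behaviour of $\varphi_L$ found in the proof of Proposition~\ref{prop: NegBinNegBin1}), and determine the local exponents of the other three solutions of the fundamental system. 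By Flajolet--Odlyzko singularity analysis~\cite{FlOd90,FlSe07}, a solution with local exponent $\rho$ at $z_1$ has coefficients of order $z_1^{-n} n^{-\rho-1}$ (up to logarithmic factors if exponents differ by integers). Comparing $-\rho-1$ across the fundamental system against $\alpha-1=-\tfrac12$, I expect the indicial roots of the three ``spurious'' solutions to be larger than $-\tfrac12$, so their coefficients grow strictly slower than $a_n$; that yields stability. One must also confirm no other singularity of the ODE on the circle $|z|=z_1$ competes — here there is none, since the only other singularity $1/(1-q)$ lies strictly further out.

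The main obstacle I anticipate is twofold. First, solving and reading off the indicial equation of the order-four ODE~\eqref{eq: ODE} at $z=z_1$ is a nontrivial algebraic computation (best delegated to the computer algebra system, consistent with the paper's spirit), and one has to be careful about degenerate cases where two indicial roots coincide or differ by an integer, which would introduce logarithmic terms in the local solutions — these still only contribute sub-polynomial corrections, so the conclusion is unaffected, but the argument needs a remark to that effect. Second, Frobenius' method gives the exponents but one should make sure the corresponding coefficient functions genuinely have coefficients of the claimed order and that the connection coefficient expressing $\varphi_L$ in the local basis is nonzero (which it is, since $\varphi_L$ itself is the unique-up-to-scalar solution with exponent $-\tfrac12$ that is, in fact, the probability generating function). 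With these points addressed, the domination $|a_n|$ exceeds the growth of every other fundamental solution follows, and the stated numerical stability is established.
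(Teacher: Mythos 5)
Your general plan --- locate the dominant singularity $z_1$, apply Frobenius' method to read off indicial exponents, and invoke singularity analysis to compare growth rates --- matches the paper's strategy, and your secondary observations (the exponent for $\varphi_L$ is $-\tfrac12$; $1/(1-q)$ lies farther out than $z_1$; indicial roots differing by integers give only logarithmic corrections, which decay) are all in line with what the paper verifies. However, there is a genuine gap at the very step where you set up the analysis: you write that bounding the remaining fundamental solutions of the \emph{order-four} recursion~\eqref{eq:rec negbin negbin} is ``equivalently'' bounding the remaining solutions of the \emph{order-three} ODE~\eqref{eq: ODE}. This is not an equivalence. The solution space of~\eqref{eq:rec negbin negbin} is four-dimensional, while~\eqref{eq: ODE} has only three independent solutions, so at least one fundamental solution of the recursion does not arise from~\eqref{eq: ODE} at all and would be invisible to your analysis. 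A priori that extra solution could dominate $a_n$, and nothing in your argument rules this out.

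The paper addresses exactly this pitfall: rather than working with~\eqref{eq: ODE}, it converts the recursion~\eqref{eq:rec negbin negbin} \emph{back} into a (new) fourth-order differential equation $\mathcal{E}$, which is satisfied by the generating function of every solution of the recursion, and then runs Frobenius at $z_1$ on $\mathcal{E}$. Its indicial polynomial has degree four, $(w-2)(w-1)w(1+2w)$ up to a constant, so all four local exponents are accounted for; the root $w=-\tfrac12$ gives the growth~\eqref{negbin negbin asmpt}, and the roots $0,1,2$ give solutions with at most logarithmic singularities, whose coefficients decay. To repair your argument you should make the same move --- transform the recursion into a fourth-order ODE before doing the local analysis --- because, as the paper notes explicitly, ``new solutions might creep in when passing from a differential equation to a recurrence or vice versa.''
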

\begin{proof}
We show, by Frobenius' theory of power series solutions of differential equations~\cite{Ince:ODE}, that no solution of~\eqref{eq:rec negbin negbin}
grows faster than~\eqref{negbin negbin asmpt} (with $\alpha=\tfrac12$ and $\beta=\tfrac13$). Instead of working directly with~\eqref{eq: ODE}, we transform~\eqref{eq:rec negbin negbin} into a differential equation. This is necessary
because new solutions might creep in when passing from a differential equation to a recurrence or vice versa. The new differential equation, called
$\mathcal{E}$ in what follows, is solved by
all generating functions of solutions of~\eqref{eq:rec negbin negbin}. It is of order four and has the same
leading coefficient as~\eqref{eq: ODE}. By equating this coefficient to zero, we find the dominating singularity $z_1= (1-q(1-p)^{3})/(1-q)$.
The indicial polynomial of $\mathcal{E}$ is found by plugging in a generalized power series $(z-z_1)^w \sum_{k=0}^\infty A_k (z-z_1)^k$
with undetermined $w$ and $A_k$ and equating the coefficient of the lowest power of $z_1-z$ to zero.
The lowest power turns out to be $(z-z_1)^{w-3}$ with the coefficient $(w-2)(w-1)w(1+2w)$ times a constant.
The degree of this indicial polynomial equals the order of the differential equation, hence $z_1$ is a regular singularity of $\mathcal{E}$.
The roots of the indicial polynomial are the possible values of the exponent $w$ in the generalized power series solution.
The root $w=-\tfrac12$ leads, by singularity analysis, to a solution of $\mathcal{E}$ whose coefficients
grow like~\eqref{negbin negbin asmpt} (with $\alpha=\tfrac12$ and $\beta=\tfrac13$). The other solutions of the fundamental system that Frobenius' method
yields have either no singularity at $z_1$ or a logarithmic singularity at $z_1$. The latter type occurs
because some roots of the indicial polynomial differ by integers, and the
coefficients of the corresponding solutions of $\mathcal{E}$ grow like $1/n$ times a power of $\log n$~\cite[Chapter VI]{FlSe07}.
\end{proof}

The second example from Section~\ref{se:examples} can be treated analogously:

\begin{prop}\label{prop: PoissonInvGaussian}
  Let $N \sim \poisson{\Lambda}$ and $\Lambda\sim\mathrm{GIG}(\psi,\chi,\theta)$, where $\psi,\chi$ and $\theta$ are positive. Furthermore assume that $X\sim\mathrm{Geo}(1,q)$ with $q \in (0,1)$. 
  Then the probabilities $a_n=\prob{L=n}$ satisfy
  \begin{equation}\label{eq: ping asmpt}
    a_n \sim C \chi^{-\theta/2} D^{-\theta} (2n)^{\theta-1} z_1^{-n} 
  \end{equation}
as $n\to\infty$, where
  \[
    C=\frac{\psi^{\theta/2}}{K_\theta(\sqrt{\chi \psi})}, \quad z_1=\frac{1}{1- \psi q/(2+\psi)}, \quad \text{and} \quad  D=\frac{(2+\psi)(2+\psi(1-q))}{2q}.
  \]
\end{prop}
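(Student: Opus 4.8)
The plan is to mimic the proof of Proposition~\ref{prop: NegBinNegBin1}: locate the dominant singularity of $\varphi_L(z) = \varphi_N(\varphi_X(z))$, expand $\varphi_L$ around it, and then invoke Flajolet--Odlyzko singularity analysis. First I would find the singularity. The map $\varphi_X(z) = qz/(1-(1-q)z)$ is an increasing bijection of $[0,1/(1-q))$ onto $[0,\infty)$, while the non-constant factors of $\varphi_N(s)$ in~\eqref{eq:phi_N}, namely $(\psi+2-2s)^{-\theta/2}$ and $K_\theta(\sqrt{\chi(\psi+2-2s)})$, are analytic in $s$ except for a branch point at $s=(2+\psi)/2$, where $\psi+2-2s$ vanishes. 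Hence the singularity of $\varphi_L$ nearest the origin is the unique $z_1\in(0,1/(1-q))$ with $\varphi_X(z_1)=(2+\psi)/2$; solving this linear equation gives $z_1 = 1/(1-\psi q/(2+\psi))$, and since $\psi/(2+\psi)<1$ one has $z_1<1/(1-q)$, so the pole of $\varphi_X$ plays no role.

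Next I would pin down the local behaviour. Writing $t := \psi+2-2s$, the relevant factor of $\varphi_N$ is $t^{-\theta/2}K_\theta(\sqrt{\chi t})$, and the small-argument asymptotics $K_\theta(x) = \tfrac12\Gamma(\theta)(2/x)^{\theta}(1+o(1))$ \cite{Abramowitz/Stegun} (valid for every $\theta>0$; when $\theta$ is a positive integer there is an additional logarithmic term of lower order) give
\[
  \varphi_N(s) \sim \frac{\psi^{\theta/2}}{K_\theta(\sqrt{\chi\psi})}\,2^{\theta-1}\Gamma(\theta)\,\chi^{-\theta/2}\,(\psi+2-2s)^{-\theta},\qquad s\to\tfrac{2+\psi}{2}.
\]
Substituting $s=\varphi_X(z)$ and expanding the analytic function $\psi+2-2\varphi_X(z)$ around $z_1$ (where $\varphi_X'(z_1)\neq0$) yields $\psi+2-2\varphi_X(z) = D(1-z/z_1)+\Oh((1-z/z_1)^2)$ with $D = 2z_1\varphi_X'(z_1) = (2+\psi)(2+\psi(1-q))/(2q)$, the constant in the statement. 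Combining the two expansions,
\[
  \varphi_L(z) \sim \frac{\psi^{\theta/2}}{K_\theta(\sqrt{\chi\psi})}\,2^{\theta-1}\Gamma(\theta)\,\chi^{-\theta/2}\,D^{-\theta}\,(1-z/z_1)^{-\theta},\qquad z\to z_1.
\]

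Finally I would transfer this to the coefficients. After checking that $\varphi_L$ is analytic in a slit disc at $z_1$ apart from this singularity (see below), the transfer theorem \cite{FlOd90,FlSe07} gives $[z^n](1-z/z_1)^{-\theta}\sim z_1^{-n}n^{\theta-1}/\Gamma(\theta)$, the error term contributing coefficients of strictly smaller order; the factor $\Gamma(\theta)$ cancels and one is left with $a_n \sim C\chi^{-\theta/2}D^{-\theta}(2n)^{\theta-1}z_1^{-n}$, $C=\psi^{\theta/2}/K_\theta(\sqrt{\chi\psi})$, which is~\eqref{eq: ping asmpt}. The main obstacle is precisely this $\Delta$-analyticity: unlike in Proposition~\ref{prop: NegBinNegBin1}, the outer function contains a Bessel function, so one has to argue from the global analytic structure of $t^{-\theta/2}K_\theta(\sqrt{\chi t})$ (e.g.\ through its representation via $I_{\pm\theta}$, which exhibits the only finite singularity as the algebraic/logarithmic one at $t=0$) that $\varphi_X$ carries a $\Delta$-domain at $z_1$ into one at $(2+\psi)/2$ that stays off the branch cut, so that no spurious singularities are introduced; the remaining steps are routine local computations.
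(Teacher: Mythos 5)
Your proof is correct and follows essentially the same route as the paper: locate the dominant singularity $z_1$ via $\varphi_X(z_1)=(\psi+2)/2$, apply the small-argument asymptotics $K_\theta(x)\sim 2^{\theta-1}\Gamma(\theta)x^{-\theta}$, and transfer via singularity analysis. You supply somewhat more detail than the paper's terse proof — in particular the explicit derivation $D=2z_1\varphi_X'(z_1)$, the caveat about logarithmic corrections when $\theta$ is a positive integer, and the remark on verifying $\Delta$-analyticity of the composite — all of which are welcome but do not change the method.
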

\begin{proof}
We proceed analogously to Proposition~\ref{prop: NegBinNegBin1}.
The dominating singularity of $\varphi_L(z)$ is located at $z=z_1$.
We use the expansion
\[
  K_\theta(z) = 2^{\theta-1} \Gamma(\theta) z^{-\theta} + \Oh(z^{\min\{\theta,2-\theta\}}),  \qquad z \to 0,
\]
valid for $\theta>0$.
{}From this we find
\[
  \varphi_L(z_1 z) \sim C D^{-\theta} \chi^{-\theta/2} 2^{\theta-1}\Gamma(\theta) (1-z)^{-\theta}, \qquad z \to 1.
\]
The result now follows from singularity analysis, since
the coefficients of $(1-z)^{-\theta}$ asymptotically equal $n^{\theta-1}/\Gamma(\theta)$.
Once again, an asymptotic expansion to arbitrary order can be readily obtained.
\end{proof}

\begin{prop}
  Let $N \sim \poisson{\Lambda}$ and $\Lambda\sim\mathrm{GIG}(\psi,\chi,\frac23)$. Furthermore assume that $X\sim\mathrm{Geo}(1,q)$ with $q \in (0,1)$. 
  Then the computation of the $a_n$ by the recursion~\eqref{P-GIG rec} is numerically stable.
\end{prop}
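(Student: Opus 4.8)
The plan is to imitate the proof of Proposition~\ref{prop: NegBinNegBinStab}. By the stability criterion recalled at the start of Section~\ref{se:stability}, it suffices to show that no solution of the recursion~\eqref{P-GIG rec} grows faster, as $n\to\infty$, than the sequence $(a_n)$ itself; and Proposition~\ref{prop: PoissonInvGaussian} (with $\theta=\tfrac23$) already tells us that $a_n\sim\mathrm{const}\cdot n^{-1/3}z_1^{-n}$, where $z_1=1/(1-\psi q/(2+\psi))$. Since spurious solutions can creep in when one passes between a recursion and a differential equation, I would first use Theorem~\ref{thm:d-finite}(i) to convert~\eqref{P-GIG rec} back into a linear differential equation $\mathcal{E}$ of order four with polynomial coefficients; then every generating function $\sum_{n\ge0}b_nz^n$ of a solution $(b_n)$ of~\eqref{P-GIG rec} solves $\mathcal{E}$, and $\varphi_L$ is one such solution.

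Next I would equate the leading coefficient of $\mathcal{E}$ to zero to find its singularities, expecting the one closest to the origin to be $z_1$; the inequality $\psi/(2+\psi)<1$ gives $z_1<1/(1-q)$, so the pole of $\varphi_X$ and the point $z=0$ lie farther out and may be ignored. Then I would compute the indicial polynomial of $\mathcal{E}$ at $z_1$ by substituting a generalized power series $(z-z_1)^w\sum_{k\ge0}A_k(z-z_1)^k$ and equating the coefficient of the lowest occurring power of $z-z_1$ to zero, hoping that it comes out of degree four --- the order of $\mathcal{E}$ --- so that $z_1$ is a regular singular point and Frobenius' method yields a fundamental system of generalized power series (with logarithms where roots differ by integers) convergent near $z_1$. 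Finally I would match the coefficient growth of each member of this system: the root $w=-\theta=-\tfrac23$ corresponds, via singularity analysis, to a solution whose coefficients are of exact order $z_1^{-n}n^{\theta-1}=z_1^{-n}n^{-1/3}$ --- this is the branch carrying the behaviour of $\varphi_L$ from Proposition~\ref{prop: PoissonInvGaussian} --- whereas the remaining roots should all exceed $-\tfrac23$ and give solutions that are either analytic at $z_1$ or have at worst a logarithmic singularity there, with coefficients $\Oh(z_1^{-n}n^{r-1}(\log n)^k)$ for some exponent $r>-\tfrac23$ and some $k\ge0$, hence $\oh(z_1^{-n}n^{-1/3})$. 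Thus $(a_n)$ dominates, which is the stability assertion.

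The step I expect to be the main obstacle is the same one that makes Proposition~\ref{prop: NegBinNegBinStab} nontrivial: ruling out bad behaviour of the solutions of $\mathcal{E}$ that do not come from $\varphi_L$. The transcendental factor $K_{2/3}(g(\varphi_X(z)))$ in~\eqref{eq:phi_N 2} produces, through the $w^{-\theta}$ and $w^{+\theta}$ parts of the small-argument expansion of $K_\theta(w)$, an entire family of local exponents at $z_1$ whose consecutive members differ by an integer, so I will have to read off from the explicit indicial polynomial that (i) no solution of $\mathcal{E}$ is more singular at $z_1$ than $(1-z/z_1)^{-2/3}$, i.e.\ no indicial root is smaller than $-\tfrac23$, and (ii) no logarithm is attached to the exponent $-\tfrac23$; either failure would destroy the dominance of $(a_n)$ and hence the stability. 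Once the indicial polynomial has been computed, shown to have degree four, and its root-induced growth rates tabulated, the argument closes exactly as in the first two examples of Section~\ref{se:examples}.
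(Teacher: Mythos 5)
Your plan is exactly the paper's approach: convert the recursion~\eqref{P-GIG rec} to a fourth-order differential equation $\mathcal{E}$, locate the dominating singularity $z_1$, compute the indicial polynomial, and check that the root $w=-\tfrac23$ yields the fastest-growing solution. The computation you flag as the remaining obstacle does go through as you hope --- the paper reports the indicial polynomial as $w(w-1)(w-2)(3w+2)$, so the roots are $0,1,2,-\tfrac23$, none is smaller than $-\tfrac23$, and since $-\tfrac23$ does not differ from any other root by an integer, no logarithm attaches to it, confirming both of the conditions (i) and (ii) you identified.
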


\begin{proof} Completely analogous to the proof of Proposition~\ref{prop: NegBinNegBinStab}.
The indicial polynomial is $w(w-1)(w-2)(3w+2)$. The root $w = -\frac23$ leads to a solution whose coefficients grow like~\eqref{eq: ping asmpt} (with $\theta=\frac23$), whereas the coefficients of the other solutions grow slower.
\end{proof}

The approach we have just illustrated works whenever the dominating singularities of the differential equation
satisfied by $\varphi_L(z)$ are regular. In our third example, however, the dominating singularity is irregular. In general, it is difficult to say anything about
the growth order of the power series coefficients of the solutions in this case.
In our example, though, all solutions of the differential equation for $\varphi_L(z)$ can be expressed in closed form,
and their coefficients can be analyzed by Cauchy's integral formula and the saddle point method. 

\begin{prop}
  Let $N \sim \poisson{\lambda}$ and $X \sim \NBin{\frac12}{p}$ with $p \in (0,1)$.
  Then the computation of the $a_n$ by the recursion~\eqref{eq:rec poisson negbin} is numerically stable, and the probabilities satisfy
  \[
    a_n \sim \frac{\lambda^{1/3}p^{1/6}}{2^{1/3}\sqrt{3\pi}} (1-p)^n n^{-5/6 }\exp\left(3p^{1/3}(\lambda/2)^{2/3} n^{1/3} - \lambda\right)
  \]
  as $n\to\infty$.
\end{prop}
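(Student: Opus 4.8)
The plan is to avoid the recursion~\eqref{eq:rec poisson negbin} altogether and argue directly with the closed form
\[
  \varphi_L(z) = e^{-\lambda}\exp\bigl(\lambda\sqrt{p}\,u^{-1/2}\bigr), \qquad u := 1-(1-p)z,
\]
which is analytic in $\cn\setminus[z_1,\infty)$, where $z_1 = 1/(1-p)$, and has an \emph{essential} singularity at $z_1$. As this is not a branch point of finite order, the singularity-analysis transfer used in Propositions~\ref{prop: NegBinNegBin1} and~\ref{prop: PoissonInvGaussian} is no longer available, and we use the saddle-point method~\cite[Ch.~VIII]{FlSe07} instead.

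For the asymptotics of $a_n$ I would start from Cauchy's formula and substitute $u=1-(1-p)z$, obtaining
\[
  a_n = \frac{1}{2\pi i}\oint \frac{\varphi_L(z)}{z^{n+1}}\,dz = \frac{e^{-\lambda}(1-p)^n}{2\pi i}\oint \exp\bigl(\lambda\sqrt{p}\,u^{-1/2}\bigr)\,(1-u)^{-n-1}\,du,
\]
the $u$-contour being a small positively oriented circle around $u=1$ in the cut plane. Writing the integrand as $\exp(h_n(u))$ with $h_n(u) := \lambda\sqrt{p}\,u^{-1/2} - (n+1)\log(1-u)$, the saddle equation $h_n'(u_0)=0$ reads $(n+1)/(1-u_0) = \tfrac12\lambda\sqrt{p}\,u_0^{-3/2}$, so $u_0 = (\lambda\sqrt{p}/2)^{2/3}\,n^{-2/3}\bigl(1+\oh(1)\bigr)$, a saddle drifting toward the singularity as $n\to\infty$. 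Expanding $u_0$ one order further, one finds $h_n(u_0) = 3(\lambda\sqrt{p}/2)^{2/3}\,n^{1/3} + \oh(1) = 3p^{1/3}(\lambda/2)^{2/3}\,n^{1/3} + \oh(1)$ and $h_n''(u_0) \sim \tfrac{3\lambda\sqrt{p}}{4}(\lambda\sqrt{p}/2)^{-5/3}\,n^{5/3}$. Deforming the contour to the circle $|u-1| = 1-u_0$, which passes through the real positive saddle $u_0$ in the vertical direction of steepest descent (note $h_n''(u_0)>0$), the usual Laplace estimate gives
\[
  a_n \sim e^{-\lambda}(1-p)^n\,\frac{e^{h_n(u_0)}}{\sqrt{2\pi\,h_n''(u_0)}},
\]
and inserting the two expansions above reproduces precisely the asserted formula. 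The only step beyond bookkeeping is the admissibility check, i.e.\ that the arc of the circle away from $u_0$ contributes negligibly; this holds because on $|u-1|=1-u_0$ the function $\operatorname{Re} h_n$ has a sharp Gaussian-type peak at $u_0$, exactly as for Hayman-admissible functions of the shape $\exp\bigl((1-u)^{-1/2}\bigr)$.

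For numerical stability I would first record that~\eqref{eq:ode poisson negbin} has the fundamental system $\varphi_\pm(z) = \exp\bigl(\pm\lambda\sqrt{p}\,u^{-1/2}\bigr)$ (obtained from the two branches of the algebraic function $\varphi_X$, and readily verified by direct substitution into~\eqref{eq:ode poisson negbin}), so that $\varphi_L = e^{-\lambda}\varphi_+$. The solution space of the order-three recursion~\eqref{eq:rec poisson negbin} is three-dimensional, since the coefficient of $a_{n+3}$, namely $4(n+2)(n+3)$, never vanishes for $n\ge0$; it is spanned by the coefficient sequences of $\varphi_+$ and of $\varphi_-$ together with the sequence $(1,0,0,\dots)$, which one checks directly is a solution and which slips in because~\eqref{eq:rec poisson negbin} encodes the $z^{n+1}$-coefficients of~\eqref{eq:ode poisson negbin} but not its $z^0$-relation. (Equivalently, every such generating function solves the order-three equation obtained by differentiating~\eqref{eq:ode poisson negbin} once, whose solutions are $\varphi_+$, $\varphi_-$ and the constant $1$.) It remains to compare the three: the coefficients of $\varphi_+$ are $e^{\lambda}a_n$, and by the asymptotics just established $a_n/(1-p)^n\to\infty$; the coefficients of the constant $1$ vanish for $n\ge1$; and $\varphi_-$ extends continuously to the closed disc $\{|z|\le z_1\}$ with $|\varphi_-(z)|\le1$ there, since $|z|\le z_1$ forces $\operatorname{Re} u\ge0$, hence $\operatorname{Re}(u^{-1/2})\ge0$, so Cauchy's estimate yields $\lvert[z^n]\varphi_-\rvert \le z_1^{-n} = (1-p)^n = \oh(a_n)$. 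Therefore every solution of~\eqref{eq:rec poisson negbin} is $\Oh(a_n)$; i.e.\ $a_n$ grows at least as fast as any other solution, and the recursion is numerically stable.

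The main obstacle is the saddle-point analysis behind the asymptotics: in contrast with the regular-singularity cases of the earlier propositions, one must run a genuine steepest-descent argument through a moving saddle $u_0\to0$, keep enough terms in the expansion of $u_0$ to control the $\oh(1)$ error in the exponent (so that $e^{h_n(u_0)}$ is pinned down up to a factor $1+\oh(1)$), and verify the negligibility of the off-saddle arc. Once $h_n(u_0)$ and $h_n''(u_0)$ are known to the required precision, recovering the constant $\lambda^{1/3}p^{1/6}/(2^{1/3}\sqrt{3\pi})$ and the polynomial factor $n^{-5/6}$ is routine.
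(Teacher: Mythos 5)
Your approach is in essence the one the paper takes: a saddle-point analysis of Cauchy's integral, with a moving saddle $u_0\to 0$ of order $n^{-2/3}$, yielding the exponential $\exp(3p^{1/3}(\lambda/2)^{2/3}n^{1/3})$, the $n^{-5/6}$ polynomial factor, and the constant from $h_n(u_0)$ and $h_n''(u_0)$. The bookkeeping checks out; I verified that $h_n(u_0)=3(\lambda\sqrt p/2)^{2/3}n^{1/3}+\oh(1)$ and that $\bigl(2\pi h_n''(u_0)\bigr)^{-1/2}\sim \lambda^{1/3}p^{1/6}2^{-1/3}(3\pi)^{-1/2}n^{-5/6}$ reproduce the stated constant. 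Two presentational differences from the paper are improvements: you keep $p$ and $\lambda$ general throughout (the paper specializes to $p=\tfrac12$, $\lambda=1$ and notes the general case poses no new difficulty), and the substitution $u=1-(1-p)z$ normalizes the singularity to $u=0$, which streamlines the expansions.

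The genuinely different step is your treatment of $\varphi_-(z)=\exp(-\lambda\sqrt p\,u^{-1/2})$: instead of running a second contour estimate for $b_n=[z^n]\varphi_-$ with tail and near-axis pieces, you observe that $|z|\le z_1$ forces $\operatorname{Re}u\ge0$, hence $\operatorname{Re}(u^{-1/2})\ge0$, so $|\varphi_-|\le1$ on the closed disc $\{|z|\le z_1\}$; Cauchy's estimate then gives $|b_n|\le z_1^{-n}=(1-p)^n=\oh(a_n)$ at a stroke. This is shorter and cleaner, and it is correct (the extension to $z_1$ is by continuity with $\varphi_-(z_1)=0$). Your explanation of why $(1,0,0,\dots)$ is a third solution --- the constant solves the once-differentiated ODE, which works here precisely because $Q_0=\lambda^2p(1-p)^2$ is a constant --- is also a nice touch the paper does not spell out.

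Two small caveats, neither fatal. First, there is a sign/orientation slip after the change of variables: with $dz=-du/(1-p)$ and the image contour positively oriented about $u=1$, your displayed identity for $a_n$ is off by a sign (check it against $\varphi_L\equiv1$, $n=0$); since the end asymptotics are positive, this is pure bookkeeping, but it should be fixed. Second, the ``admissibility check'' for the off-saddle arc is asserted by analogy with Hayman-admissible functions rather than carried out. The paper does supply the explicit tail bound (choosing $\tfrac79<\alpha<\tfrac56$ and showing the tail carries an extra factor $\exp(-n^{5/3-2\alpha})$), and for a complete proof you should likewise make the decay of $\operatorname{Re}h_n$ away from $u_0$ on your circle quantitative.
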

\begin{proof}
  We present the proof for $p=\tfrac12$ and $\lambda=1$. The general case yields no additional complications.
  The functions
  \[
    \left\{ \exp \frac{\pm 1}{\sqrt{2-z}} \right\}
  \]
  form a fundamental system for the differential equation~\eqref{eq:ode poisson negbin}.
  Therefore, a fundamental system of the third order recursion~\eqref{eq:rec poisson negbin} is given by our $a_n$, the coefficients
  of $\exp(-1/\sqrt{2-z})$, and $(1,0,0,0,\dots)$.
  We have to show that the coefficients $a_n$ of $\varphi_L(z)=\exp(1/\sqrt{2-z})$ have the announced asymptotic behavior, and that those
  of $\exp(-1/\sqrt{2-z})$ grow slower. (The additional solution $(1,0,0,0,\dots)$ of~\eqref{eq:rec poisson negbin} cannot make the computation
  unstable, of course.) To do so, we appeal to Cauchy's integral formula:
  \begin{align*}
    a_n &= \frac{1}{2\mathrm{i}\pi} \int_{|z|=r} \frac{\varphi_L(z)}{z^{n+1}}\mathrm{d}z \\
     &= \frac{1}{2\pi r^n} \int_{-\pi}^\pi \mathrm{e}^{-\mathrm{i}n\theta} \varphi_L(r \mathrm{e}^{\mathrm{i}\theta}) \mathrm{d} \theta,
       \qquad 0 < r < 2.
  \end{align*}
  We will determine the asymptotics of the integral by the saddle point method~\cite{deBr58,FlSe07}.
  To find an approximate saddle point, we equate the derivative
  of the integrand to zero, which leads to the equation
  \[
    4n^2(2-z)^3 = z^2.
  \]
  Clearly, we must have $z\to2$ as $n\to\infty$ here. By plugging $z=2-u$ with unknown $u = \oh(1)$ into the equation, we obtain
  $u\sim n^{-2/3}$. Therefore, we choose the new integration contour $|z|=r:=2-n^{-2/3}$. The dominant part of
  the integral arises near the saddle point, for $\theta=\Oh(n^{-\alpha})$, where $\alpha$ is a fixed parameter with $\tfrac79<\alpha<\tfrac56$.
  Outside this central part we have
  \[
    \cos \theta \leq \cos(n^{-\alpha}) = 1 - \tfrac12 n^{-2\alpha} + \Oh(n^{-4\alpha}),
  \]
  and using this estimate in
  \begin{equation*}
    |2-z|^{-1/2} = (4-4r\cos\theta +r2)^{-1/4}, 
  \end{equation*}
  we obtain
  \begin{equation}\label{eq:tail}
    \exp \frac{1}{|2-z|^{1/2}} \leq \exp(n^{1/3}-n^{5/3-2\alpha}) (1+\oh(1)).
  \end{equation}
  To calculate the central part of the integral, we compute the second order approximation
  \[
    (2-r \mathrm{e}^{\mathrm{i}\theta})^{-1/2} = n^{1/3} + \mathrm{i}n\theta -\tfrac32 n^{5/3}\theta2 + \Oh(n^{7/3-3\alpha}).
  \]
  Since
  \[
    \int_{-n^{-\alpha}}^{n^{-\alpha}} \exp(-\tfrac32 n^{5/3}\theta2) \mathrm{d} \theta \sim \sqrt{\frac{2\pi}{3}} n^{-5/6}
  \]
  and $r^{-n} \sim 2^{-n} \exp(\tfrac12 n^{1/3})$,
  we find
  \[
    a_n \sim \frac{1}{2\pi r^n} \int_{-n^{-\alpha}}^{n^{-\alpha}} \mathrm{e}^{-\mathrm{i}n\theta} \exp(\frac{1}{\sqrt{2-r \mathrm{e}^{\mathrm{i}\theta}}}) \mathrm{d} \theta
    \sim \frac{1}{\sqrt{6\pi}} \frac{\exp(\tfrac32 n^{1/3})}{2^n n^{5/6}}.
  \]
  Note that we have shown above that the remaining portion of the integral, where $n^{-\alpha}<|\theta|<\pi$, grows slower, by virtue of
  the factor $\exp(-n^{5/3-2\alpha})$ in~\eqref{eq:tail}.
  
  Now that we have established the asymptotics of $a_n$,
  it remains to show that the coefficients, $b_n$ say, of $\exp(-1/\sqrt{2-z})$ grow slower. To see this,
  we use Cauchy's integral formula with the same contour as above:
  \[
    b_n = \frac{1}{2\pi r^n} \int_{-\pi}^\pi \mathrm{e}^{-\mathrm{i}n\theta}
      \exp(\frac{-1}{\sqrt{2-r \mathrm{e}^{\mathrm{i}\theta}}}) \mathrm{d} \theta.
  \]
  Here the integrand has no saddle point near $z=r$, but a bound good enough for our purpose can still be deduced.
  The tail $|\theta|>n^{-\alpha}$ satisfies the same estimate as for $\varphi_L(z)$. Near the real axis, for $\theta=\oh(n^{-\alpha})$, we have
  \begin{align*}
    |\exp(-(2 - r \mathrm{e}^{\mathrm{i}\theta})^{-1/2})| &\sim \exp(-n^{1/3}+\tfrac32 n^{5/3}\theta^2) \\
     &\leq \exp(-\tfrac12 n^{1/3})
  \end{align*}
  for large $n$,
  which shows that the integral over the central part grows slower than that for $a_n$ (it even tends to zero), whence $b_n=\oh(a_n)$.
\end{proof}

%
%


\addcontentsline{toc}{section}{References}
\bibliographystyle{abbrv}
\bibliography{LiteratureRecursions}
\end{document}